\newtheorem{theorem}{Theorem}[section]
\newtheorem{lemma}[theorem]{Lemma}
\newtheorem{corollary}[theorem]{Corollary}
\theoremstyle{definition}
\theoremstyle{remark}
\numberwithin{equation}{section}
\newcommand*\diff{\mathop{}\!\mathrm{d}}
\DeclareMathOperator{\Span}{span}
\DeclareMathOperator{\csch}{csch}
\DeclareMathOperator{\sech}{sech}
\begin{document}

\title{Heat Content Determines Planar Triangles}

\author{Reed Meyerson}
\address{Department of Mathematics, New College of Florida, Sarasota, Florida 34243}
\curraddr{}
\email{reed.meyerson12@ncf.edu}
\thanks{}

\author{Patrick McDonald}
\address{Department of Mathematics, New College of Florida, Sarasota, Florida 34243}
\email{mcdonald@ncf.edu}
\thanks{}

\subjclass[2010]{Primary 58J50, 58J35; Secondary 58J65}

\date{June 1, 2016}

\dedicatory{}

\keywords{spectral theory, heat content, exit time moments}

\begin{abstract}
We prove that the heat content determines planar triangles.
\end{abstract}

\maketitle

\section{Introduction}
Let $D$ be a bounded open subset of $\mathbb{R}^2.$   Suppose we heat $D$ to uniform initial temperature of 1, and then, holding the boundary of $D$ at temperature 0, we let the heat dissipate.  We can describe the evolution of temperature via the solution of the heat equation on $D.$  To do so, let $\Delta$ be the Laplacian\footnote{We work with the convention that the Dirichlet Laplacian is positive.} on  $D$ and solve  
\begin{align}
\frac{\partial u}{\partial t} & = \Delta u   \quad\text{on } D \times (0,\infty) \label{HE1.1} \\
\lim\limits_{x\rightarrow x_0} u(x,t)  & = 0 \quad\forall\text{ } x_0 \in \partial D \times (0,\infty) \label{HE1.2} \\
u(x,0) & = 1 \quad\text{on } D\label{HE1.3} 
\end{align}
Using the temperature $u(x,t),$ we can associate to $D$ a measure of the heat in $D$ at time $t,$ the so-called heat content of $D:$ 
\begin{equation}
H_D(t) = \int\limits_{D}u(x,t) \diff x \label{HC1.1}
\end{equation}
We prove:

\begin{theorem}\label{mainresult} Heat content determines planar triangles.
\end{theorem}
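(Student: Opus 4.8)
The plan is to extract from $H_D(t)$ a short list of congruence invariants by expanding the heat content for small $t$, and then to show that for triangles this list already pins the triangle down up to isometry.

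First I would record the small-time asymptotics of the heat content of a polygon. For a straight edge the heat lost is governed by the one-dimensional error-function profile and contributes exactly $\frac{2}{\sqrt\pi}\sqrt t$ per unit length; for a single corner the infinite-wedge problem has no length scale other than $\sqrt t$, so by scaling its heat-content defect is \emph{exactly} $c(\gamma)\,t$, where $\gamma$ is the interior angle and $c$ is a fixed, scale-invariant function given by a convergent integral involving $\csch$ and $\sech$. All remaining contributions (finite edge lengths, interactions between distinct edges and corners) involve the heat kernel between points at positive distance and are therefore $O(e^{-\delta/t})$ for some $\delta>0$. Hence
\begin{equation}
H_D(t) = A - \frac{2}{\sqrt\pi}\,L\,\sqrt t + \Big(\sum_{i=1}^{3} c(\gamma_i)\Big)\,t + O\big(e^{-\delta/t}\big),
\end{equation}
where $A=\abs{D}$ is the area, $L$ the perimeter, and $\gamma_1,\gamma_2,\gamma_3$ the interior angles. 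Reading off successive limits as $t\to 0^+$ shows that $H_D$ determines the triple $(A,L,S)$ with $S:=\sum_i c(\gamma_i)$.

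Next I would reduce to a matching problem. A triangle is determined up to congruence by three real parameters (its unordered side lengths, equivalently an unordered angle triple with $\gamma_1+\gamma_2+\gamma_3=\pi$ together with a scale), and $(A,L,S)$ are exactly three invariants. Since $c$ depends only on angles, both $A/L^2$ and $S$ are scale-invariant functions on the shape simplex $\{(\gamma_1,\gamma_2,\gamma_3):\gamma_i>0,\ \sum\gamma_i=\pi\}$ modulo permutations of the angles, while $L$ recovers the scale. It therefore suffices to prove that the shape map $\{\gamma_1,\gamma_2,\gamma_3\}\mapsto (A/L^2,\,S)$ is injective.

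The crux, and the step I expect to be the main obstacle, is this injectivity. My plan is to compute $c(\gamma)$ explicitly and establish its analytic properties on $(0,\pi)$: $c(\pi)=0$, positivity and strict monotone decrease, and strict convexity (these follow from the integral representation, where the $\csch$/$\sech$ factors control the sign of the derivatives). Using the law of sines one checks that the level sets of $A/L^2$ are the one-parameter families of triangles with a common area-to-perimeter-squared ratio, so the task becomes showing that $S$ is strictly monotone along each such family, whence $(A/L^2,S)$ separates all noncongruent shapes. I would prove this by differentiating $S$ subject to the constraint $d(A/L^2)=0$ and showing the resulting derivative keeps a fixed sign via convexity of $c$. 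The delicate point is the neighborhood of the equilateral shape: there both $A/L^2$ and $S$ are critical (the former by the isoperimetric property of triangles, the latter by Jensen applied to the convex $c$), so the differential of the map vanishes and first-order injectivity fails. One must then pass to a second-order analysis and verify that the Hessians of $A/L^2$ and of $S$ on the simplex are not proportional as quadratic forms, so that the two invariants still separate shapes to leading order near the equilateral point.

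Finally, as a safeguard I would note that $H_D(t)$ contains strictly more than $(A,L,S)$: the $O(e^{-\delta/t})$ corrections encode the pairwise distances between vertices through terms of size $e^{-\ell_j^2/4t}$, and for a triangle these pairwise distances are precisely the three edge lengths. Thus even if the three leading invariants failed to separate some exceptional pair of shapes, the edge lengths recovered from the subleading asymptotics give an SSS comparison and complete the proof.
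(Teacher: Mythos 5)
Your setup matches the paper's: both extract the triple $(A,L,S)$ from the van den Berg--Srisatkunarajah expansion and reduce to injectivity of the shape map $\{\gamma_1,\gamma_2,\gamma_3\}\mapsto(A/L^2,S)$ on the angle simplex (the paper, following Grieser--Maronna, uses $\Psi=\sum\cot(\gamma_i/2)=L^2/4A$ in place of $A/L^2$). But at the crux there is a genuine gap. You claim that strict monotonicity of $S$ along the level curves of $A/L^2$ will follow ``via convexity of $c$.'' It cannot: the function $\psi(\gamma)=\cot(\gamma/2)$ is itself positive, strictly decreasing and strictly convex on $(0,\pi)$, yet $\sum_i\psi(\gamma_i)$ is \emph{constant} on those level curves. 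So no argument using only monotonicity and convexity of the corner function can work; one needs a property of $\varphi$ that distinguishes it from affine images of $\cot(\cdot/2)$. This comparison is exactly the content of the paper's key lemmas: at a critical point of $S$ on a level curve, Lagrange multipliers force the three angles to solve one equation $\varphi'(x)=A\psi'(x)+B$, and the paper shows this equation has at most two distinct roots because $\varphi'-\frac{\log 2}{2}\psi'$ is increasing and convex --- a delicate fact whose verification (integral representations of $\varphi$ with the pole at $0$ subtracted, plus a chain of hyperbolic inequalities) occupies essentially the whole paper. Your proposal treats precisely this step as routine.

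Two further points fail as stated. First, your second-order fix near the equilateral shape is based on a false premise: both $A/L^2$ and $S$ are invariant under permutations of the angles, so at the equilateral point their Hessians on the tangent plane of the simplex commute with the standard (irreducible, two-dimensional) action of the symmetric group on three letters, hence each is a multiple of the identity; they are automatically proportional, and the proposed verification that they are not would fail. No second-order analysis is needed anyway: by strict convexity and symmetry of $\Psi$, the equilateral point is the unique minimizer of $L^2/A$, so its level set is a singleton and area plus perimeter already determine it --- this is how the paper disposes of that case, while points near (but not at) the equilateral point are handled by the same monotonicity-on-level-arcs argument as everywhere else. Second, your ``safeguard'' is unsupported: the $O(e^{-\delta/t})$ term in the polygonal expansion is only an error bound, not an asymptotic series with identifiable geometric coefficients, so you cannot read off edge lengths from it; nothing in the cited literature licenses an SSS comparison from exponentially small corrections. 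As it stands, the proposal reproduces the easy reduction but omits, and mislabels as routine, the analytic heart of the theorem.
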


Theorem (\ref{mainresult}) should be viewed in the context of related results from spectral geometry.   In particular, in her thesis C. Durso \cite{D} proved 

\begin{theorem} Dirichlet spectrum determines planar triangles.
\end{theorem}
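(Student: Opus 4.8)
The plan is to pass from the Dirichlet spectrum to the heat trace $Z(t)=\sum_n e^{-\lambda_n t}$, which is determined by the spectrum alone, and to exploit the known short-time asymptotic expansion of $Z(t)$ for polygonal domains. For a convex polygon $D$ with area $A$, perimeter $L$, and interior angles $\gamma_1,\dots,\gamma_k$, the classical computations (Kac; McKean--Singer; with the sharp remainder due to van den Berg--Srisatkunarajah) give
\[
Z(t) \sim \frac{A}{4\pi t} - \frac{L}{8\sqrt{\pi t}} + \sum_{i=1}^{k}\frac{\pi^2-\gamma_i^2}{24\pi\gamma_i} + O(e^{-c/t}), \qquad t\to 0^+ .
\]
Reading off the coefficients of $t^{-1}$, $t^{-1/2}$, and $t^{0}$ shows that the spectrum determines the three real numbers $A$, $L$, and, since a triangle satisfies $\gamma_1+\gamma_2+\gamma_3=\pi$, the quantity $\Sigma:=\tfrac1{\gamma_1}+\tfrac1{\gamma_2}+\tfrac1{\gamma_3}$. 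Thus it suffices to prove that the map sending a triangle, up to congruence, to the triple $(A,L,\Sigma)$ is injective.

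Next I would separate shape from scale. A triangle is determined up to congruence by its unordered angle triple $(\alpha,\beta,\gamma)$, i.e.\ its similarity class, together with one length. Writing $R$ for the circumradius, the law of sines gives $L=2R(\sin\alpha+\sin\beta+\sin\gamma)$ and $A=2R^2\sin\alpha\sin\beta\sin\gamma$, so the ratio
\[
\frac{L^2}{A} = \frac{2(\sin\alpha+\sin\beta+\sin\gamma)^2}{\sin\alpha\sin\beta\sin\gamma}
\]
depends only on the angles and is a scale-invariant spectral quantity, as is $\Sigma$. If the angles are determined by the pair $(\Sigma,\,L^2/A)$, then the similarity class is fixed, after which $A$ pins down $R$ and hence the triangle. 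The problem therefore reduces to injectivity of the \emph{shape map}
\[
\Phi(\alpha,\beta,\gamma) = \left(\frac1\alpha+\frac1\beta+\frac1\gamma,\ \frac{(\sin\alpha+\sin\beta+\sin\gamma)^2}{\sin\alpha\sin\beta\sin\gamma}\right)
\]
on the two-dimensional shape space $\{\alpha,\beta,\gamma>0:\ \alpha+\beta+\gamma=\pi\}$ modulo permutation of the angles, for which I would work in the fundamental domain $0<\alpha\le\beta\le\gamma$.

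The hard part will be proving this shape injectivity \emph{globally}. My plan is to fix a level set $\Sigma=c$, which is a smooth arc in the fundamental domain, parametrize it, and show that the second coordinate $L^2/A$ is strictly monotone along it; monotonicity on each arc, together with the fact that the level sets $\{\Sigma=c\}$ foliate the shape space, then forces $\Phi$ to be injective. Concretely I would compute the gradients of $\Sigma$ and of $L^2/A$ in the angle variables subject to $\alpha+\beta+\gamma=\pi$ and show they are nowhere parallel on the open shape space, giving nonvanishing of the relevant Jacobian and hence local injectivity, and then upgrade to a global statement using the behavior of $\Phi$ near the boundary of the simplex: as any angle tends to $0$ a triangle degenerates and both $\Sigma\to\infty$ and $L^2/A\to\infty$, which should yield properness of $\Phi$ and let me rule out collisions of distinct level-set branches. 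The monotonicity and Jacobian estimates are the genuine analytic obstacle; once the angles are recovered, reconstructing the scale from $A$ is immediate, completing the argument.
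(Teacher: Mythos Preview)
Your approach is essentially the Grieser--Maronna heat-trace argument that the paper cites (and indeed models its own heat-content proof on): pass to $Z(t)$, read off area, perimeter, and $\sum 1/\theta_i$ from the small-time asymptotics, reduce to shape space by forming the scale-invariant pair $(L^2/A,\ \Sigma)$, and prove injectivity by a Lagrange-multiplier/level-set monotonicity argument in the angle simplex.

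One concrete improvement you should make: replace your circumradius-based expression for $L^2/A$ by the half-angle cotangent identity
\[
\frac{L^2}{4A}=\cot\frac{\alpha}{2}+\cot\frac{\beta}{2}+\cot\frac{\gamma}{2},
\]
which the paper records as $\Psi$. The point is that both $\Psi$ and $\Sigma$ are then \emph{separable} sums $f(\alpha)+f(\beta)+f(\gamma)$ of strictly convex, decreasing functions of a single angle; this is exactly what drives the clean linear-independence-of-gradients criterion (Lemma~\ref{lin_ind} in the paper, following \cite{GM}) and turns your acknowledged ``genuine analytic obstacle'' into a short calculus exercise. With your sine-product formula the Jacobian computation is messier and the global monotonicity along $\{\Sigma=c\}$ is harder to see. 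Once you switch to the cotangent form, the argument you sketch---local injectivity from nonvanishing Jacobian on non-isosceles points, plus boundary behavior to globalize---goes through as in \cite{GM}; the paper does not supply an independent proof of this theorem beyond citing Durso and Grieser--Maronna.
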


Durso used wave trace methods to establish her theorem.  A recent paper of Grieser and Marrona \cite{GM} establshes the result using heat trace.  We follow the argument of \cite{GM} to establish our result.

The relationship of Dirichlet spectrum to the geometry of the underlying domain is a well-studied topic with an extensive associated literature.  The same is true for the relationship between heat content and geometry, but the associated geometric invariants are distinct.  In particular, heat content is not spectral; that is, it is not determined by the Dirichlet spectrum of the domain $D.$  In fact, if we denote by $\lambda_n$ the $n$th Dirichlet eigenvalue enumerated in increasing order with multiplicity, and by $\phi_n$ a collection of associated orthonormal eigenfunctions, then 
\begin{equation*}
H_D(t) = \sum A_n^2 e^{-\lambda_n t}
\end{equation*}
 where the coefficient $A_n = \int_D \phi_n(x) \diff x$ contributes off-diagonal information (for more on the relationship between Dirichlet spectrum and heat content, see \cite{G}, \cite{MM}).  

Heat content is closely related to Brownian motion and its associated exit times from a given domain.  In more detail, suppose $X_t$ is Brownian motion in the plane, with ${\mathbb P}^x$ the associated collection of probability measures charging Brownian paths beginning at $x.$  Given a domain $D\subset {\mathbb R}^2,$ let $\tau$ be the first exit time from $D:$
\begin{equation*}
\tau = \inf\{t\geq 0: X_t \notin D\}.
\end{equation*}
Let ${\mathbb E}^x$ be expectation with respect to the probability ${\mathbb P}^x$ and consider the moments of the exit time, $ {\mathbb E}^x[\tau^k],$ where $k$  varies over the natural numbers.  Integrating over starting points in the domain $D$ results in a sequence of positive real numbers, the $L^1$-norms $ \|{\mathbb E}^x[\tau^k]\|_1,$ of the exit time moments of Brownian motion.  It is easy to see that each element in the sequence is an invariant of the isometry class of $D.$  The extent to which this $L^1$-moment spectrum determines the geometry of $D$ is an active area of research, with roots in the nineteenth century (the first moment is also known as the torsional rigidity, a well-studied construct in the theory of elastica, and the objective function in the St. Venant Torsion Problem).    For domains which are sufficiently regular, it is known that the $L^1$-moment spectrum determines heat content \cite{MM}.  From this we have an immediate corollary:

\begin{corollary} The $L^1$-moment spectrum determines planar triangles.
\end{corollary}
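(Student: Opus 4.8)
The plan is to obtain the corollary as a composition of two facts already in hand: the result of \cite{MM} that, for sufficiently regular domains, the $L^1$-moment spectrum determines the heat content, together with Theorem \ref{mainresult}. The first step is to record the probabilistic dictionary linking the two invariants. Writing $u(x,t) = {\mathbb P}^x[\tau > t]$ for the solution of \eqref{HE1.1}--\eqref{HE1.3}, the heat content becomes $H_D(t) = \int_D {\mathbb P}^x[\tau > t]\,\diff x$, while the layer-cake formula gives ${\mathbb E}^x[\tau^k] = k\int_0^\infty t^{k-1}{\mathbb P}^x[\tau > t]\,\diff t$. Integrating the latter over $D$ and applying Fubini yields the clean identity
\begin{equation*}
\|{\mathbb E}^x[\tau^k]\|_1 = k\int_0^\infty t^{k-1} H_D(t)\,\diff t,
\end{equation*}
exhibiting the $L^1$-moment spectrum as (scaled) moments of the heat content in the time variable.

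With this in hand, the second step is to invoke \cite{MM}: for domains satisfying the regularity hypotheses there, the assignment $H_D \mapsto \{\|{\mathbb E}^x[\tau^k]\|_1\}_{k\ge 1}$ is injective, i.e.\ the moment data above determine $H_D(t)$ uniquely (an inversion of the associated moment, or Mellin, transform). The only thing to verify before quoting this is that planar triangles lie in the admissible class. This is where I expect the one genuine point of care: triangles are convex with piecewise-smooth boundary but have three corners, so I would check that the corner contributions do not spoil the decay and regularity of $t\mapsto H_D(t)$ required for the inversion in \cite{MM}. For a triangle $H_D(t)$ admits a short-time expansion whose leading terms are governed by the area, the perimeter, and the angles; convexity and finite perimeter supply the integrability and growth control needed to place the domain within the scope of the moment-uniqueness result.

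Granting admissibility, the corollary follows at once: if two triangles $D_1$ and $D_2$ share the same $L^1$-moment spectrum, then by the displayed identity and the injectivity from \cite{MM} they share the same heat content, $H_{D_1}\equiv H_{D_2}$, and by Theorem \ref{mainresult} this forces $D_1$ and $D_2$ to be isometric. The argument is therefore purely a composition, with the substantive mathematics carried by Theorem \ref{mainresult} and by the regularity check that situates triangles within the hypotheses of \cite{MM}; the main obstacle, such as it is, is confirming that the corner geometry of a triangle is compatible with the conditions under which \cite{MM} inverts the moment data.
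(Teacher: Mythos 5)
Your proposal is correct and follows exactly the paper's route: the corollary is obtained by composing the result of \cite{MM} (that the $L^1$-moment spectrum determines heat content for sufficiently regular domains) with Theorem \ref{mainresult}. The paper treats this as immediate, while you additionally spell out the layer-cake/Fubini identity and the admissibility of triangles under the hypotheses of \cite{MM}, but the logical structure is the same.
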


In the remainder of the paper we prove our main result.

\section{Proof of Theorem \ref{mainresult}}

As in the introduction, let $D$ be a bounded, open subset of $\mathbb{R}^2$ and let $u(x,t)$ denote the solution of the heat equation with uniform initial temperature distribution 1 and Dirichlet boundary conditions (i.e., $u(x,t)$ solves (\ref{HE1.1})-(\ref{HE1.3})).  Let $H_D(t)$ be the heat content of $D:$
\begin{equation*}
H_D(t) = \int\limits_{D}u(x,t) \diff x.
\end{equation*}

As mentioned in the introduction, the relationship of heat content to the geometry of the underlying domain is a well-studied topic with an extensive associated literature.  For our purposes, it is known that when the boundary of $D$ is sufficiently regular, there is a small time asymptotic expansion of $H_D(t).$  More precisely, when $D$ is smoothly bounded domain with compact closure in a Riemannian manifold, it is a theorem of Van den Berg and Gilkey \cite{BG} that $H_D(t)$ has an expansion of the form 
\begin{equation}\label{asymptotics2.1}
H_D(t) \simeq  \sum_{n=0}^\infty a_n t^{\frac{n}{2}}
\end{equation}
where the coefficients $a_n$ are local invariants of the metric.  A number of the coefficients have been computed; for example, it is known that $a_0= |D|$ and $a_1 = -\frac{2}{\sqrt{\pi}} |\partial D|$ where $|D|$ denotes the Riemannian volume of $D$ and $|\partial D|$ denotes the volume of the boundary of $D$ with respect to the induced surface measure.  (For a more complete discussion of invariants appearing in the expansion (\ref{asymptotics2.1}), and the relationship between heat content, heat trace and Dirichlet spectrum see \cite{G} and references therein). 
 
When $D$ is a planar polygon, it is a theorem of Van den Berg and Srisatkunarajah  \cite{BS}  that $H_D$ has small time asymptotic expansion given by
\begin{equation}\label{asymptotics2.2}
H_D(t) = |D| - \frac{2|\partial D|}{\sqrt{\pi}} t^{\frac{1}{2}}
+ 4t\sum\limits_{i=1}^n\varphi(\theta_i) + O(e^{-q/t})
\end{equation}
where $|D|$ is the area of $D,$ $|\partial D|$ is the perimeter of the boundary of $D,$ $\{\theta_i\}$ are the interior angles associated to $D,$ and 
\begin{equation}\label{thirdterm}
\varphi(\theta) = \int\limits_0^\infty\frac{\sinh[(\pi-\theta)\xi]}{\sinh(\pi\xi)\cosh(\theta\xi)} \diff \xi.
\end{equation}
\par
In the same paper, Van den Berg and Srisatkunarajah also establish a small time asymptotic expansion for the heat trace.  More precisely, if $h_D(t)$ is the trace of the heat kernel, then, with notation as above, 
\begin{equation}\label{asymptotics2.3}
h_D(t) = \frac{|D|}{4\pi} t^{-1} - \frac{|\partial D|}{8\sqrt{\pi}}t^{-\frac{1}{2}}  
+ \frac{1}{24} \sum\limits_{i=1}^n \left(\frac{\pi}{\theta_i} - \frac{\theta_i}{\pi}\right) + O(e^{-r/t})
\end{equation}
\par

In \cite{GM}, Grieser and Maronna use the heat trace asymptotics to show that a triangle's Dirichlet spectrum is unique amongst triangles. To prove their theorem, they show that triangles are determined up to isometry by a triple given by area, perimeter, and the sum of the reciprocals of the interior angles (up to a constant, the third term appearing in the asymptotics of the heat trace for the domain).   We employ a similar analysis to prove Theorem \ref{mainresult}.  More precisely,  let  $\Phi:(0,\pi)\times(0,\pi)\times(0,\pi)\rightarrow\mathbb{R}$ be defined by
\begin{equation}\label{G}
\Phi(x,y,z) = \varphi(x)+\varphi(y)+\varphi(z)
\end{equation}
with $\varphi$ as in (\ref{thirdterm}), the summands in the third term in the asymptotics of the heat content for the domain.  We will prove that triangles are determined up to isometry by the first three coefficients of the heat content for the domain; i.e. the triple given by area, perimeter, and value of $\Phi$ as a function of the interior angles. This is equivalent to the following:

\begin{theorem}\label{thm:main}
Let $T$ be the space of equivalence classes of isometric triangular domains in the plane. Define $H:T\rightarrow C^\infty(0,\infty)$ by
\begin{equation*}
H(D) = H_D(t), \quad\text{ for } D\in T
\end{equation*}
where $H_D(t)$ is the heat content associated to $D.$  Then $H$ is injective.
\end{theorem}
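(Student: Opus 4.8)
The plan is to read the three leading coefficients of the small time asymptotic expansion (\ref{asymptotics2.2}) off of the smooth function $H_D(t)$ and then to show that these three numbers determine a triangle up to isometry. First I would observe that if $H_{D_1} = H_{D_2}$ as functions on $(0,\infty)$, then, since the exponentially small remainder $O(e^{-q/t})$ does not affect any coefficient in the expansion in powers of $t^{1/2}$, the two triangles must share the same area $A = |D|$, the same perimeter $P = |\partial D|$, and the same value of $\Phi$ evaluated at their interior angles. Thus Theorem \ref{thm:main} is equivalent to the assertion that the triple $(A, P, \Phi)$ determines a planar triangle up to congruence, exactly the form of the problem solved for the heat-trace invariants in \cite{GM}.

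Next I would reduce this to a one-variable monotonicity statement. Writing $s = P/2$ and letting $x = s-a$, $y = s-b$, $z = s-c$ for side lengths $a,b,c$, Heron's formula gives $x+y+z = s$ and $xyz = A^2/s$; that is, fixing $A$ and $P$ is the same as fixing the elementary symmetric functions $e_1 = x+y+z$ and $e_3 = xyz$. The remaining symmetric function $e_2 = xy+yz+zx$ then parametrizes the family of triangles with the given area and perimeter: since $e_1,e_2,e_3$ determine the multiset $\{x,y,z\}$, distinct values of $e_2$ correspond to non-congruent triangles, and $e_2$ ranges over a closed interval whose endpoints are the isosceles configurations (double roots of $t^3 - e_1 t^2 + e_2 t - e_3$). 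Because the interior angles are symmetric functions of $(a,b,c)$, the quantity $\Phi$ is a well-defined function of $(e_1,e_2,e_3)$, hence of $e_2$ alone once $A,P$ are fixed. The theorem therefore follows once I show that $\Phi$ is \emph{strictly monotonic} in $e_2$ along each such family: two triangles with equal $(A,P,\Phi)$ then have equal $e_2$, equal $\{x,y,z\}$, and are congruent.

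The strictly monotonic step is the main obstacle. The tangent direction to the level curve $\{e_1 = s,\ e_3 = A^2/s\}$ is proportional to $(x(y-z),\,y(z-x),\,z(x-y))$, so the absence of an interior critical point amounts to showing that
\begin{equation*}
\frac{\partial \Phi}{\partial x}\,x(y-z) + \frac{\partial \Phi}{\partial y}\,y(z-x) + \frac{\partial \Phi}{\partial z}\,z(x-y) \neq 0
\end{equation*}
whenever $x,y,z$ are distinct, where the partials are computed through the law of cosines $\cos\theta_i = (b^2+c^2-a^2)/(2bc)$ and the chain rule. The analytic heart of this inequality is a convexity/sign property of the transcendental function $\varphi$ in (\ref{thirdterm}): I would differentiate under the integral sign to obtain $\varphi'$ and $\varphi''$ and analyze the sign of the resulting $\sinh$ and $\cosh$ combinations on $(0,\pi)$, aiming to show that $\varphi$ is strictly convex (equivalently that $\Phi$ is strictly Schur-convex in the angles). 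Granting such a property, a symmetry argument rules out scalene critical points: at a critical configuration the Lagrange condition forces a relation among the three distinct angles that strict convexity of $\varphi$ forbids, so $\Phi$ can have no interior critical point and is therefore strictly monotonic on the family.

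I expect the genuine difficulty to lie in two linked places. First, the map from side lengths to interior angles is nonlinear, so Schur-convexity of $\Phi$ in the angles does not translate immediately into monotonicity in the side-length parameter $e_2$; the chain-rule bookkeeping and the geometry of the level curve must be combined carefully, with the isosceles endpoints (where the naive derivative degenerates by symmetry) treated by a limiting argument. Second, and more seriously, extracting a clean sign for $\varphi'$ or $\varphi''$ from the integral (\ref{thirdterm}) is delicate, since the integrand changes character as $\theta$ crosses $\pi/2$; I anticipate that establishing the required convexity of $\varphi$, rather than the geometric reduction, will be the crux of the argument.
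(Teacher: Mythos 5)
Your overall reduction matches the paper's: extract $(|D|,\,|\partial D|,\,\Phi)$ from the expansion (\ref{asymptotics2.2}), then show this triple determines the triangle by proving $\Phi$ is strictly monotone along the one-parameter family of triangles with fixed area and perimeter, ruling out scalene critical points by Lagrange multipliers. But there is a genuine gap at your crux. You claim that strict convexity of $\varphi$ (equivalently, Schur-convexity of $\Phi$ in the angles) "forbids" the Lagrange relation among three distinct angles. It does not. In angle coordinates the constraint "fixed area and perimeter" is the level set of $\Psi(\alpha,\beta,\gamma)=\cot(\alpha/2)+\cot(\beta/2)+\cot(\gamma/2)=|\partial D|^2/(4|D|)$, and the Lagrange condition at a scalene critical point says that $\varphi'(\theta)=A\psi'(\theta)+B$ has three distinct roots. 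Since $\psi$ is itself decreasing and convex, both $\varphi'$ and $A\psi'+B$ are increasing when $A>0$, and two increasing functions can perfectly well cross three times; convexity of $\varphi$ alone excludes only the easy case $A\le 0$. If mere convexity sufficed, the bulk of this paper and of \cite{GM} would be unnecessary. What is actually needed is a \emph{relative} condition tying $\varphi$ to the constraint function $\psi$: Lemma~\ref{lin_ind} requires a constant $c>0$ such that $\varphi'-c\psi'$ is increasing and convex, and Lemma~\ref{log2_lemma} establishes this for $c=\frac{\log 2}{2}$ --- this is where all the hard analysis lives (the representation (\ref{varphi_rep}), Lemma~\ref{xon6}, and the chain of hyperbolic inequalities). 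One then writes $\varphi'+A\psi'=(\varphi'-c\psi')+(A+c)\psi'$ and checks that, whatever the sign of $A+c$, the resulting function is either increasing or convex, hence has at most two roots, contradicting the existence of three.

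There is also a structural problem with your side-length parametrization that would bite even if you had the correct analytic lemma. Because each interior angle depends on all three sides through the law of cosines, the partial derivative $\partial\Phi/\partial x$ mixes $\varphi'(\theta_1)$, $\varphi'(\theta_2)$, $\varphi'(\theta_3)$ with the Jacobian entries $\partial\theta_j/\partial x$, so your critical-point equation along the curve $\{e_1=\mathrm{const},\,e_3=\mathrm{const}\}$ never reduces to a one-variable root count of the form "$h(\theta)=0$ at three distinct angles." The paper's angle parametrization is what makes the argument work: both the objective $\Phi$ and the constraint $\Psi$ are symmetric sums of one-variable functions of the angles, so the Lagrange condition decouples into the same scalar equation evaluated at the three (distinct) angles. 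You should pass to angle variables and invoke the identity $\Psi=|\partial D|^2/(4|D|)$ --- which your proposal never uses --- to recast the fixed-area-and-perimeter constraint in that symmetric-sum form; only then does the relative-convexity lemma have a clean statement to apply to.
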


From equation (\ref{asymptotics2.2}), heat content determines the area of our domain.  Thus, we can restrict our attention to the space of triangles up to scaling, which we denote $T_s$. 

We will use the following notation:  Given a function $F:\mathbb{R}^3_+\rightarrow\mathbb{R}$ and a real number $r$,  let $L_r(F)$ be the level set of $F$ with value $r:$ 
\begin{equation*}
L_r(F) = \{(x,y,z)\in\mathbb{R}^3_+:F(x,y,z)=r\}
\end{equation*}

Where $\mathbb{R}^3_+$ denotes the positive octant of $\mathbb{R}^3$. Define a function $\Theta:\mathbb{R}^3_+\rightarrow\mathbb{R}$ by
\begin{equation}\label{theta}
\Theta(\alpha,\beta,\gamma) = \alpha + \beta + \gamma.
\end{equation}

Let $\mathbb{T} = L_\pi(\Theta)$. Then $T_s$ may be identified with quotient $\mathbb{T}/\sim$ where the equivalence relation $\sim$ identifies permutations of a given triple.  Because each ordering of the angles corresponds to one of the six sections in figure~(\ref{fig:head_on}), there is a bijective correspondence between $T_s$ and any one of the six sections of $\mathbb{T}$. We will call a point in $\mathbb{T}$ an isosceles point if it represents an isosceles triangle. This occurs if and only if $p$ lies on one of the three lines in figure~(\ref{fig:head_on}). The point at the intersection of these lines represents an equilateral triangle, thus we will call it the equilateral point.
\par
\begin{figure}[h]
	\centering
	\begin{subfigure}[b]{0.5\textwidth}
    	\includegraphics[width=\textwidth]{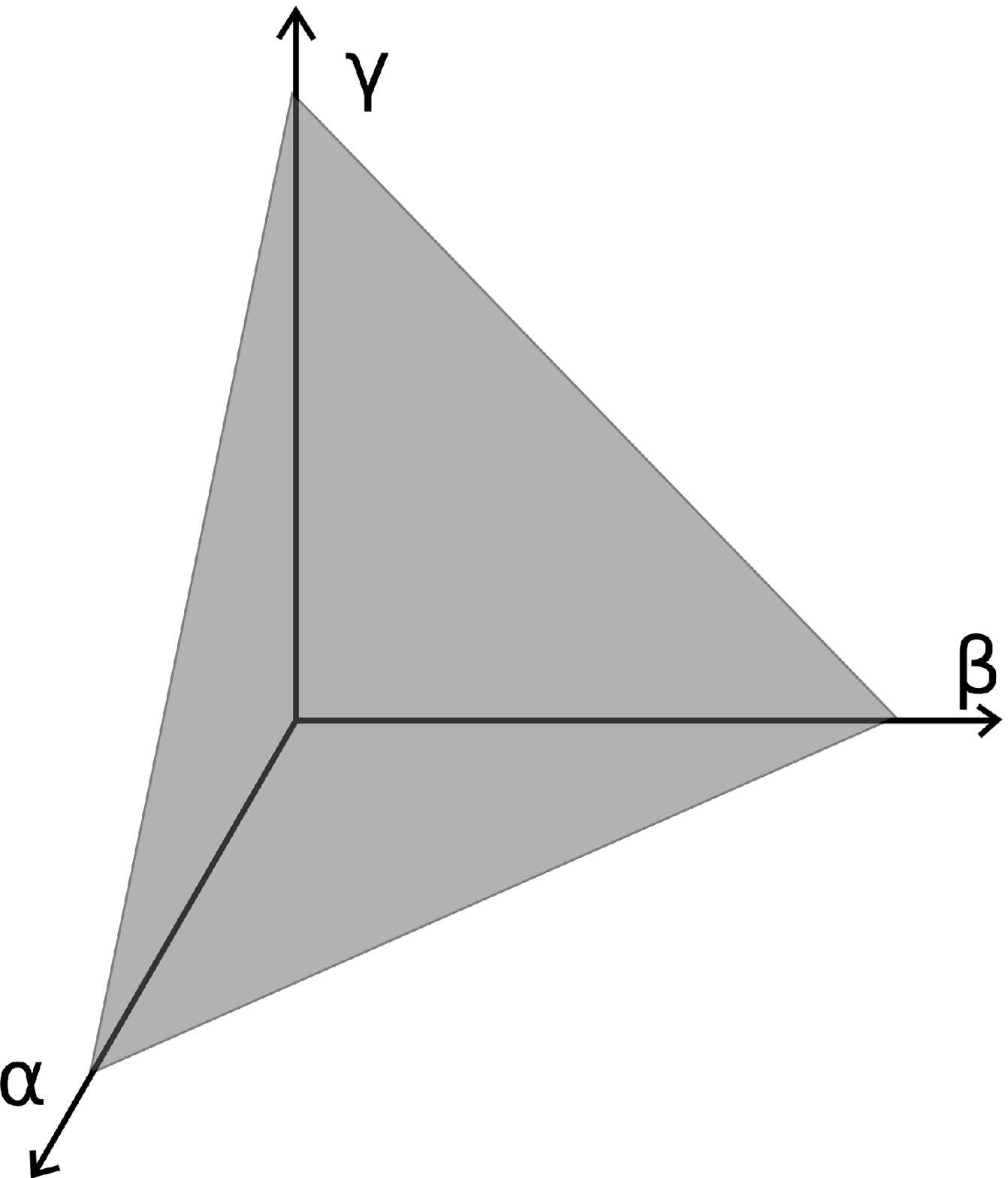}
  		\caption{$L_\pi(\Theta)$}
  		\label{fig:in_r3}
	\end{subfigure}
	~
	\begin{subfigure}[b]{0.5\textwidth}
		\includegraphics[width=\textwidth]{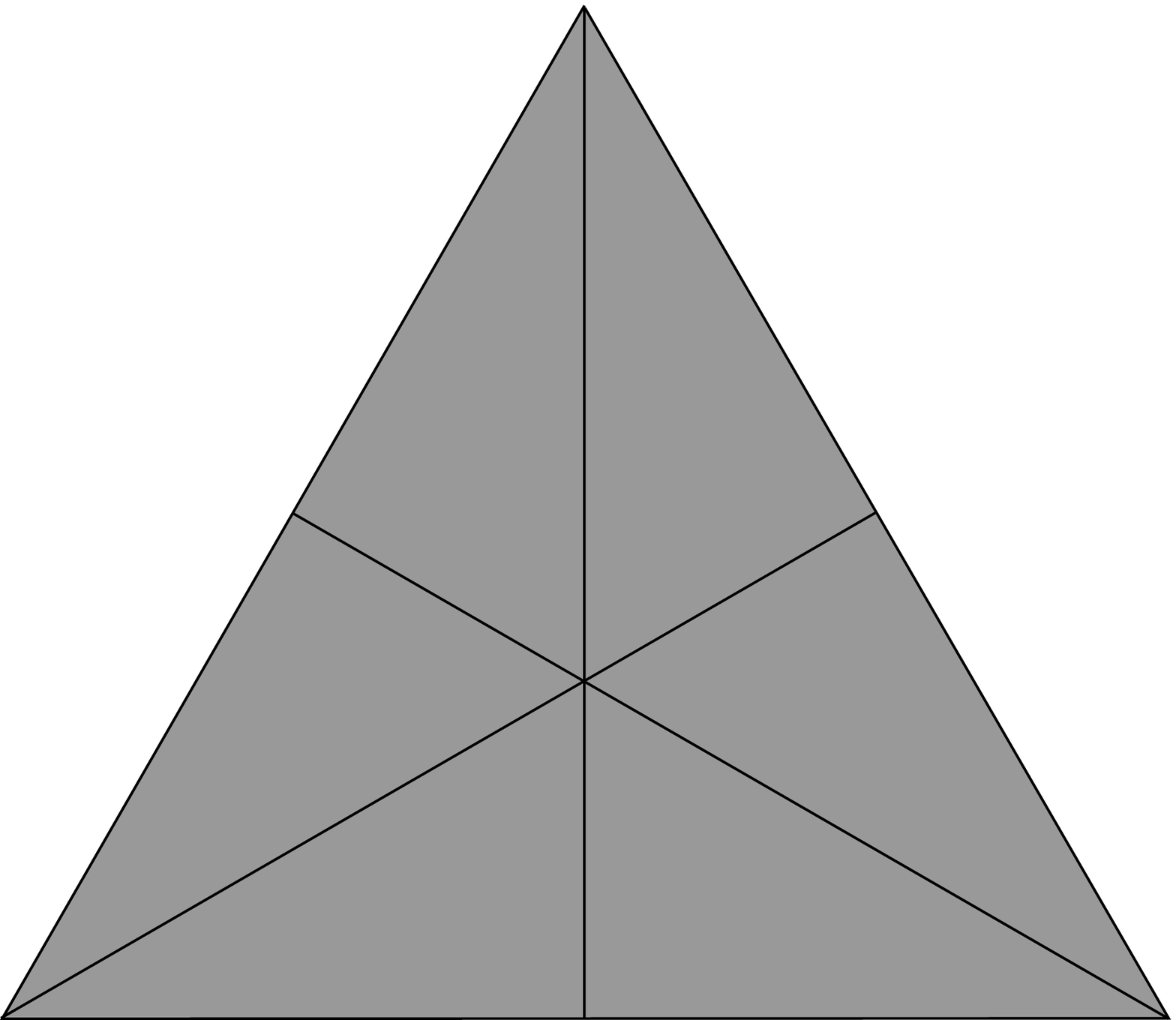}
		\caption{The six sections of $L_\pi(\Theta)$ each correspond to a different ordering of the angles}
		\label{fig:head_on}
	\end{subfigure}
	\label{fig:planes}
	\caption{Two views of $L_\pi(\Theta)$}
\end{figure}

Write 
\begin{equation}\label{cot}
\psi(x) = \cot\left(\frac{x}{2}\right)
\end{equation}
and let $\Psi:\mathbb{R}_3^+\rightarrow\mathbb{R}$ be defined by
\begin{equation}\label{psi}
\Psi(\alpha,\beta,\gamma) = \psi(\alpha) + \psi(\beta) + \psi(\gamma).
\end{equation}
It can be shown using elementary geometry \cite{GM}, that for any triangle with interior angles $(\alpha,\beta,\gamma),$
\begin{equation*}
\Psi(\alpha,\beta,\gamma) = \frac{|\partial D|^2}{4|D|}.
\end{equation*}
Observe that $\Psi(p)\to \infty$ as $p$ approaches the boundary of $\mathbb{T}$. Thus, the sublevel sets of $\Psi$, given by $S_c(\Psi)=\{p\in\mathbb{T}:\Psi(p)\le c\}$, stay away from the boundary of $\mathbb{T}$. In addition, a straightforward computation of the Hessian of $\Psi$ indicates that $\Psi$ is strictly convex on the portion of the positive octant bounded by $\mathbb{T}.$ Thus $S_c$ is a convex set.
\par 
It follows from the definition of $\Psi$ that its level sets and sublevel sets will inherit the perumtation symmetry of $\mathbb{T}/\sim$. Using this symmetry and the convexity of $S_c$, it can be argued that $L_c(\Psi)\cap\mathbb{T}$ contains the equilateral point if and only if $L_c(\Psi)\cap\mathbb{T}$ is a singleton. Thus, if $F:T_s\rightarrow\mathbb{R}$ can be written as the sum of a strictly convex function of angles, then the equilateral triangle is determined by its area and the value of $F$.
\par 
Before proving Theorem~\ref{mainresult}, we recall the method of Lagrange multipliers.

\begin{lemma}\label{lagrangemultiplier}
Let $f,g,h:\mathbb{R}^3\rightarrow\mathbb{R}$ be $C^1$. Let $x_0\in\mathbb{R}$. Suppose $\nabla g(x_0)$ and $\nabla h(x_0)$ are linearly independent. If $f(x_0)$ is a local solution to the problem
\begin{itemize}
    \item[] Maximize $f(x)$
    \item[] Subject to $g(x)=g(x_0)$, $h(x)=h(x_0)$
\end{itemize}
Then $\nabla f(x_0)\in\Span[\nabla g(x_0),\nabla h(x_0)]$.
\end{lemma}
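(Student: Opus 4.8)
The plan is to reduce the constrained problem to an unconstrained one-variable problem by parametrizing the constraint set locally, and then to read off the conclusion from the vanishing of a single derivative. Since $\nabla g(x_0)$ and $\nabla h(x_0)$ are linearly independent, the Jacobian of the map $(g,h):\mathbb{R}^3\to\mathbb{R}^2$ has rank $2$ at $x_0$. First I would invoke the implicit function theorem to conclude that, in a neighborhood of $x_0$, the constraint set $M=\{x:g(x)=g(x_0),\ h(x)=h(x_0)\}$ is a $C^1$ curve through $x_0$; concretely, there is a $C^1$ path $\gamma:(-\varepsilon,\varepsilon)\to\mathbb{R}^3$ with $\gamma(0)=x_0$, $\gamma'(0)\neq 0$, and $\gamma(s)\in M$ for all $s$.

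Next I would differentiate the two identities $g(\gamma(s))=g(x_0)$ and $h(\gamma(s))=h(x_0)$ at $s=0$ to obtain $\nabla g(x_0)\cdot\gamma'(0)=0$ and $\nabla h(x_0)\cdot\gamma'(0)=0$. Thus the nonzero tangent vector $\gamma'(0)$ is orthogonal to both $\nabla g(x_0)$ and $\nabla h(x_0)$; since these two gradients span a $2$-dimensional subspace of $\mathbb{R}^3$, their orthogonal complement is the single line $\mathbb{R}\,\gamma'(0)$. In particular, every vector orthogonal to $\gamma'(0)$ lies in $\Span[\nabla g(x_0),\nabla h(x_0)]$.

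Finally, since $x_0$ is a local maximizer of $f$ on $M$, the composition $f\circ\gamma$ has a local maximum at $s=0$, and being $C^1$ it satisfies $(f\circ\gamma)'(0)=\nabla f(x_0)\cdot\gamma'(0)=0$. Hence $\nabla f(x_0)$ is orthogonal to $\gamma'(0)$, and by the preceding paragraph $\nabla f(x_0)\in\Span[\nabla g(x_0),\nabla h(x_0)]$, as claimed. An alternative route avoiding the explicit curve is to argue by contradiction: if $\nabla f(x_0)$ were not in the span, then $\nabla f(x_0)$, $\nabla g(x_0)$, $\nabla h(x_0)$ would be linearly independent, so $(f,g,h):\mathbb{R}^3\to\mathbb{R}^3$ would have invertible Jacobian at $x_0$; the inverse function theorem would then produce nearby points with the same values of $g$ and $h$ but strictly larger values of $f$, contradicting local maximality. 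The only real subtlety in either approach is the correct application of the implicit (respectively inverse) function theorem together with the accompanying dimension count — the rest is routine differentiation — so I expect that invocation to be the main point requiring care.
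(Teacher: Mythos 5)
Your proof is correct. Note, however, that the paper itself offers no proof of this lemma: it is stated under the heading ``we recall the method of Lagrange multipliers'' and is simply invoked as the classical Lagrange multiplier rule (with two constraints in $\mathbb{R}^3$), so there is no argument in the paper to compare against; your write-up supplies what the paper leaves as standard background. Both of your routes are sound. In the first, the rank-$2$ Jacobian of $(g,h)$ lets the implicit function theorem represent the constraint set near $x_0$ as a $C^1$ curve $\gamma$ with $\gamma'(0)\neq 0$; differentiating the constraints shows $\gamma'(0)$ spans the orthogonal complement of $\Span[\nabla g(x_0),\nabla h(x_0)]$, and the first-order condition $\nabla f(x_0)\cdot\gamma'(0)=0$ then places $\nabla f(x_0)$ in that span, since $\bigl(\mathbb{R}\,\gamma'(0)\bigr)^{\perp}=\Span[\nabla g(x_0),\nabla h(x_0)]$ in $\mathbb{R}^3$. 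Your alternative via the inverse function theorem is equally valid and arguably cleaner: if the three gradients were independent, $(f,g,h)$ would be a local $C^1$ diffeomorphism at $x_0$, so the value $(f(x_0)+\varepsilon,\,g(x_0),\,h(x_0))$ would be attained arbitrarily close to $x_0$, contradicting constrained local maximality. One trivial point worth flagging: the paper's statement reads $x_0\in\mathbb{R}$, which is a typo for $x_0\in\mathbb{R}^3$, and you correctly read it that way.
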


Theorem~\ref{mainresult} is a corollary of the following lemmas which we will prove in the sequel:

\begin{lemma}\label{lin_ind}
    Let $f,g:(0,\pi)\rightarrow\mathbb{R}$ be $C^1$, monotone decreasing and convex. Suppose there exists a real $c>0$ such that $f'-c g'$ is increasing and convex. Suppose $F(\alpha,\beta,\gamma) = f(\alpha)+f(\beta)+f(\gamma),$  $G(\alpha,\beta,\gamma) = g(\alpha)+g(\beta)+g(\gamma)$ and $\Theta$ is as in (\ref{theta}).  Then $\nabla F, \nabla G, \nabla \Theta$ are linearly independent for all non-isosceles points of $\mathbb{T}.$
\end{lemma}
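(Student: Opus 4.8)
We have three functions $F, G, \Theta$ defined on $\mathbb{T} = L_\pi(\Theta)$, each a symmetric sum of a single-variable function. We want their gradients to be linearly independent at every non-isosceles point. The gradients are $\nabla F = (f'(\alpha), f'(\beta), f'(\gamma))$, $\nabla G = (g'(\alpha), g'(\beta), g'(\gamma))$, and $\nabla \Theta = (1,1,1)$. Linear independence of these three vectors is equivalent to the nonvanishing of the $3\times 3$ determinant whose rows are these gradients. So the whole problem reduces to understanding when
\begin{equation*}
\det\begin{pmatrix} 1 & 1 & 1 \\ f'(\alpha) & f'(\beta) & f'(\gamma) \\ g'(\alpha) & g'(\beta) & g'(\gamma) \end{pmatrix} = 0.
\end{equation*}

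**The plan.** I would expand this determinant and recognize it as a generalized Vandermonde / divided-difference expression. The key observation is that the determinant vanishes precisely when the three points $\bigl(f'(\alpha),g'(\alpha)\bigr)$, $\bigl(f'(\beta),g'(\beta)\bigr)$, $\bigl(f'(\gamma),g'(\gamma)\bigr)$ in the plane are collinear (this is exactly the standard area-of-triangle determinant with a row of ones). Thus the gradients are linearly dependent if and only if the three plane points traced out by the parametrized curve $t \mapsto \gamma(t):=\bigl(f'(t), g'(t)\bigr)$, for $t\in(0,\pi)$, are collinear. The strategy is therefore to show that this curve is \emph{strictly convex} (no three of its points are collinear unless two of the parameter values coincide), which on a non-isosceles point means all three of $\alpha,\beta,\gamma$ are distinct and so the three curve points cannot be collinear.

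**Reducing to convexity of a planar curve.** To show $\gamma(t)=(f'(t),g'(t))$ has strictly convex image, I would reparametrize by the first coordinate (legitimate since $f$ is convex, so $f'$ is monotone—though I must check $f'$ is strictly monotone, which follows from strict convexity; here the hypotheses guarantee the needed monotonicity) and study $g'$ as a function of $f'$. Collinearity of the three points is governed by the sign of the second divided difference, and strict convexity of the curve amounts to $g'$ being a strictly convex (or concave) function of $f'$. This is where the hypothesis that $f' - c g'$ is increasing and convex enters: it controls the relationship between the derivatives $f'$ and $g'$ and, via the chain rule applied to $\tfrac{d}{df'}g'$, forces the requisite strict convexity/concavity of the curve. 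I would convert the condition ``$f'-cg'$ increasing and convex'' together with ``$f,g$ decreasing and convex'' into a definite sign for the curvature of $\gamma$, thereby ruling out collinearity of three distinct points.

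**The main obstacle.** The delicate point will be translating the four structural hypotheses—$f,g$ decreasing and convex, and $f'-cg'$ increasing and convex—into a single inequality that pins down the sign of the planar curvature of $t\mapsto(f'(t),g'(t))$ and guarantees it never vanishes. The curvature of this curve has the sign of $f''g''' - g''f'''$ (the Wronskian-type quantity $f''(g')'' - g''(f')''$ in the appropriate sense), and I expect the real work to be showing that the given hypotheses force this quantity to have a constant nonzero sign. In particular I must verify that the convexity of $f' - cg'$ supplies the third-derivative control while the monotonicity assumptions supply the signs of $f''$ and $g''$, and that strict monotonicity (so that $f''\neq 0$) lets me divide safely. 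Once the curve is shown to be strictly convex, the geometric conclusion—three points with distinct parameter values cannot be collinear—is immediate, and linear independence of $\nabla F, \nabla G, \nabla\Theta$ at every non-isosceles point follows at once, since a non-isosceles triangle has $\alpha, \beta, \gamma$ pairwise distinct.
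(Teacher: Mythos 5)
Your opening reduction is correct, and it is in fact the paper's own reduction in dual form: vanishing of the determinant with rows $(1,1,1)$, $(f'(\alpha),f'(\beta),f'(\gamma))$, $(g'(\alpha),g'(\beta),g'(\gamma))$ says exactly that the three points $(f'(\cdot),g'(\cdot))$ are collinear, i.e.\ that some equation $f'(x)+Ag'(x)=B$ (or $g'(x)=B$) has the three distinct roots $\alpha,\beta,\gamma$; the paper works directly with that equation. The genuine gap is everything after this point: you never rule out collinearity, you only announce that ``the real work'' is to show the curvature quantity $f''g'''-g''f'''$ has constant nonzero sign, and that announced claim is not provable from the stated hypotheses. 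First, $f,g$ are only assumed $C^1$ with qualitative monotonicity/convexity properties, so the third derivatives in your formula need not exist. Second, and decisively, even for smooth $f,g$ the hypotheses do not control the sign: writing $\tilde f = f'-cg'$ (so $\tilde f'\ge 0$ and $\tilde f''\ge 0$ by hypothesis) and substituting $f''=\tilde f'+cg''$, $f'''=\tilde f''+cg'''$, the $c$-terms cancel and
\begin{equation*}
f''g'''-g''f''' \;=\; \tilde f'\,g''' \;-\; \tilde f''\,g'',
\end{equation*}
whose sign hinges entirely on $g'''$ --- a quantity about which the hypotheses say nothing. If $g'''\le 0$ (i.e.\ $g'$ concave, which is true for $\psi=\cot(x/2)$ but is not assumed) one gets a constant sign; but, for example, $g''(x)=1+\tfrac12\sin(kx)$, $\tilde f(x)=\epsilon e^{x}$, with additive constants chosen so that $f$ and $g$ are decreasing, satisfies every hypothesis of the lemma while $f''g'''-g''f'''=\epsilon e^{x}\bigl[\tfrac{k}{2}\cos(kx)-1-\tfrac12\sin(kx)\bigr]$ changes sign; then the curve $(f',g')$ has inflection points and your strict-convexity claim fails. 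So the step you flag as the main obstacle really is the obstacle, and your plan cannot be completed as described.

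For contrast, the paper resolves this step with no curvature computation at all: it writes $f'+Ag' = \tilde f + (A+c)g'$ and splits on the sign of $A+c$; for one sign the left-hand side is a sum of increasing functions, hence takes any value at most once, and for the other sign it is a convex function, hence takes any value at most twice --- either way fewer than the three distinct roots $\alpha,\beta,\gamma$ that linear dependence would require. This decomposition using the hypothesized function $f'-cg'$ is the device your outline is missing, and it operates at the $C^1$/qualitative level. (Note that the convex case of this dichotomy also secretly needs to know the concavity type of $g'$ --- information supplied by the concrete $\psi=\cot(x/2)$ rather than by the lemma's literal hypotheses --- so your diagnosis of where the difficulty lives is accurate; but the paper reduces it to one qualitative fact usable without third derivatives, whereas your proposal both leaves the step undone and routes it through pointwise curvature that the hypotheses cannot deliver.)
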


\begin{lemma}\label{log2_lemma}
    Let $\varphi,\psi$ be as defined in (\ref{thirdterm}) and (\ref{cot}), respectively. Then $\varphi,\psi$ are decreasing and convex on $(0,\pi)$, and $\varphi'-\frac{\log 2}{2}\psi'$ is increasing and convex on $(0,\pi)$. 
\end{lemma}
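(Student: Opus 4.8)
The plan is to dispose of the three assertions in turn, reserving the calibrated combination for last, since that is where essentially all of the difficulty resides. For $\psi$ the computation is elementary: differentiating $\psi(x)=\cot(x/2)$ gives $\psi'(x)=-\tfrac12\csc^2(x/2)$ and $\psi''(x)=\tfrac12\csc^2(x/2)\cot(x/2)$. On $(0,\pi)$ one has $x/2\in(0,\pi/2)$, so $\cot(x/2)>0$; hence $\psi'<0$ and $\psi''>0$, which is decreasing and convex.

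For $\varphi$, I would first simplify the integrand using $\sinh[(\pi-\theta)\xi]=\sinh(\pi\xi)\cosh(\theta\xi)-\cosh(\pi\xi)\sinh(\theta\xi)$, which yields
\[
\varphi(\theta)=\int_0^\infty\bigl[1-\coth(\pi\xi)\tanh(\theta\xi)\bigr]\,\diff\xi .
\]
For $\theta$ in a compact subinterval of $(0,\pi)$ the integrand and its $\theta$-derivatives decay like $e^{-2\theta\xi}$ uniformly in $\xi$, so differentiation under the integral sign is justified and produces
\[
\varphi'(\theta)=-\int_0^\infty\xi\coth(\pi\xi)\sech^2(\theta\xi)\,\diff\xi<0,
\]
\[
\varphi''(\theta)=2\int_0^\infty\xi^2\coth(\pi\xi)\sech^2(\theta\xi)\tanh(\theta\xi)\,\diff\xi>0,
\]
so $\varphi$ is decreasing and convex as well.

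For the combination, write $g=\varphi'-\tfrac{\log2}{2}\psi'$; the goal is $g'>0$ (increasing) and $g''>0$ (convex). The role of the constant $\tfrac{\log2}{2}$ is to cancel the $\theta^{-2}$ singularities of $\varphi'$ and $\psi'$ at $\theta=0$. Splitting $\coth(\pi\xi)=1+(\coth(\pi\xi)-1)$ and using $\int_0^\infty\xi\sech^2(\theta\xi)\,\diff\xi=\theta^{-2}\int_0^\infty u\sech^2 u\,\diff u=\theta^{-2}\log2$ gives $\varphi'(\theta)=-\theta^{-2}\log2-R(\theta)$, where $R(\theta)=\int_0^\infty\xi(\coth(\pi\xi)-1)\sech^2(\theta\xi)\,\diff\xi$ is smooth up to $\theta=0$ because $\coth(\pi\xi)-1$ decays exponentially. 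Similarly, the Mittag--Leffler expansion of $\csc^2$ gives $\tfrac{\log2}{2}\psi'(\theta)=-\theta^{-2}\log2-\tfrac{\log2}{4}S(\theta)$ with $S(\theta)=\sum_{n\neq0}4(\theta-2n\pi)^{-2}$. Subtracting, the singular parts cancel and $g=\tfrac{\log2}{4}S-R$, a difference of functions smooth on $[0,\pi)$; this both explains the calibrated constant and removes the apparent blow-up at the endpoints.

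The hard part is the final sign analysis. Increasing and convexity amount to $\varphi''(\theta)>\tfrac{\log2}{2}\psi''(\theta)$ and $\varphi'''(\theta)>\tfrac{\log2}{2}\psi'''(\theta)$; the latter is delicate because $\psi'''<0$ while the integrand of $\varphi'''$ carries the sign-changing factor $\sech^2(\theta\xi)-2\tanh^2(\theta\xi)$, so no term-by-term positivity is available and the calibrated constant is exactly what forces the inequalities to hold. I expect to certify them by quantitative estimates, treating a neighborhood of $0$ and of $\pi$ through the $R$ and $S$ representation together with endpoint asymptotics, and a compact middle interval through explicit bounds or a rigorous numerical enclosure. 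This sign analysis, rather than the setup above, is where the real work lies.
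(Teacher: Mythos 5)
Your treatment of the easy assertions is correct and matches the paper: the elementary computation for $\psi$, the rewriting $\varphi(\theta)=\int_0^\infty\bigl[1-\coth(\pi\xi)\tanh(\theta\xi)\bigr]\diff\xi$ via the addition formula, and differentiation under the integral to get $\varphi'<0$, $\varphi''>0$ are all sound. Your pole-cancellation step is also exactly the paper's device (there it appears as $\tilde\varphi(x)=\varphi(\pi x)-\frac{\log 2}{\pi x}$ and $\tilde\psi(x)=\psi(\pi x)-\frac{2}{\pi x}$, with the same use of $\int_0^\infty u\sech^2 u\,\diff u=\log 2$ and the partial-fraction expansion of $\csc^2$), and your explanation of why the constant $\frac{\log 2}{2}$ is the calibrated one is accurate.

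The genuine gap is that the lemma's main content --- the two sign inequalities $\varphi''>\frac{\log 2}{2}\psi''$ and $\varphi'''\ge\frac{\log 2}{2}\psi'''$ --- is never proven: your final paragraph is a statement of intent (``I expect to certify them by quantitative estimates \ldots or a rigorous numerical enclosure''), not an argument. No explicit bound on $R$, $S$, or their derivatives is derived, no endpoint asymptotics are computed, and no enclosure is exhibited, so the proof stops precisely where the work begins. For comparison, the paper closes this gap by purely explicit means: setting $I(x)=\frac{\log(\cosh x)}{x}$, all derivatives of the renormalized $\tilde\varphi$ become $\tilde\varphi^{(k)}(x)=-\frac{1}{\pi}\int_0^\infty y^{k+1}I^{(k)}(xy)\csch^2(y)\,\diff y$; monotonicity of the combination then reduces to the pointwise inequality $I''<0$, proven by differentiating an inequality saturated at the origin three times until it becomes the trivial $\tanh(2x)\ge\tanh(x)$; convexity reduces to the pointwise bound $I'''(x)\le\frac{x}{6}$ (itself proven by another such chain), combined with the explicit value $\int_0^\infty y^5\csch^2(y)\,\diff y=\frac{15\zeta(5)}{2}$ and comparison against the single $k=-1$ term $\frac{6\log 2}{(x-2)^4}$ of the partial-fraction sum via a tangent-line estimate. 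Some chain of explicit inequalities of this kind (or a genuinely carried-out interval-arithmetic verification covering the whole of $(0,\pi)$, endpoints included) is what your proposal still owes.
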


\begin{proof}[Proof of Theorem~\ref{mainresult}]
    Let $t\in T_s$. Choose a representative $p\in\mathbb{T}$ of $t$. By the discussion preceding Lemma~\ref{lagrangemultiplier}, we may assume $p$ is not the equilateral point. Then $L_{\Psi(p)}(\Psi)\cap\mathbb{T}$ is a closed curve around the equilateral point. Let $L$ be a segment of $L_{\Psi(p)}(\Psi)\cap\mathbb{T}$ such that $L$ contains $p$, $L$ is contained in one of the six sections of $\mathbb{T}$, and $L$ begins and ends at isosceles points. It follows that $L$ contains a representative of every triangle which agrees with $t$ on the value of $\Psi$. Thus, if $\Phi$ is monotone on $L$, then $\Phi$ differentiates between triangles with a fixed area and fixed value of $\Psi$. 
    \par 
    Suppose $\Phi$ is not monotone on $L$. Then $\Phi\bigg|_L$ reaches a local extremum at a non-isosceles point. A contradiction follows from Lemma~\ref{lagrangemultiplier}.
\end{proof}
\begin{proof}[Proof of lemma~\ref{lin_ind}]
    Suppose $\nabla F(x_0)+A\nabla G(x_0)-B\Theta(x_0)=0$. Then there are at least three solutions to the equation $f'(x_0)+Ag'(x_0)=B$. Suppose $\tilde f = f'-cg'$ is increasing and convex. Consider the equation $\tilde f(x) + c_1g(x) = c_2$ for fixed $c_1$ and $c_2$. If $c_1$ is positive, the left hand side is convex and there may be at most two solutions. If $c_1$ is negative, the left hand side is increasing and there may be at most one solution.
\end{proof}
\begin{proof}[Proof of lemma~\ref{log2_lemma}]
Write
\begin{align*}
    \tilde \psi(x) &= \psi(\pi x)-\frac{2}{\pi x}\\
    \tilde \varphi(x) &= \varphi(\pi x)-\frac{\log 2}{\pi x}.
\end{align*}
Then
\begin{equation*}
    \varphi(\pi x)-\frac{\log 2}{2}\psi(\pi x) = \tilde \varphi(x)-\frac{\log 2}{2}\tilde \psi(x).
\end{equation*}

Thus, it will be sufficient to show that $\tilde \varphi'-\frac{\log 2}{2}\tilde \psi'$ is increasing and concave up on $(0,1)$. We change variables in the definition of $\varphi$ to obtain
\begin{equation*}
    \varphi(\pi x) = \frac{1}{\pi}\int\limits_0^\infty\frac{\sinh(y-xy)}{\sinh(y)\cosh(xy)}dy.
\end{equation*}
Using the angle addition formula for $\sinh(x),$ we can write
\begin{equation*}
    \varphi(\pi x) = \frac{1}{\pi}\int\limits_0^\infty\coth(y)[\tanh(y)-\tanh(xy)]dy.\\
\end{equation*}
It may be verified with elementary calculus that $\psi$ is decreasing and convex. Additionally, by observing the signs of $\frac{\partial}{\partial x}\tanh(xy)$ and $\frac{\partial^2}{\partial x^2}\tanh(xy)$, it follows that $\varphi$ is decreasing and convex.
\par
We continue with our manipulation of $\varphi$ in an attempt to acquire a manageable form of $\tilde \varphi$. Integration by parts yields
\begin{align}
    \varphi(\pi x) =& \lim_{\stackrel{\epsilon \to 0}{\eta \to \infty}}\frac{1}{\pi}\coth(y)\left[\log(\cosh(y))-\frac{\log(\cosh(xy))}{x}\right]\bigg|_{y=\epsilon}^{y=\eta}\label{lims}\\
    +&\frac{1}{\pi}\int\limits_0^\infty \left(\log(\cosh(y))-\frac{\log(\cosh(xy))}{x}\right)\csch^2(y)dy. \nonumber 
\end{align}
Applying L'Hopital's rule we can evaluate (\ref{lims}).
\begin{equation*}
    \lim_{\stackrel{\epsilon \to 0}{\eta \to \infty}} \frac{1}{\pi}\coth(y)\left[\log(\cosh(y))-\frac{\log(\cosh(xy))}{x}\right]\bigg|_{y=\epsilon}^{y=\eta} =\frac{\log 2}{\pi x}-\frac{\log 2}{\pi}.
\end{equation*}
Noting that
\begin{equation*}
    \int\limits_0^\infty \log(\cosh(y))\csch^2(y)dy = \log 2,
\end{equation*}
we obtain
\begin{equation}\label{varphi_rep}
    \tilde \varphi(x) = -\frac{1}{\pi}\int\limits_0^\infty\frac{\log(\cosh(xy))}{x}\csch^2(y) dy.
\end{equation}
Using this representation of $\tilde \varphi$, we compute relevant derivatives.  Setting
\begin{equation*}
    I(x) = \frac{\log(\cosh x)}{x}
\end{equation*}
we have 
\begin{align}
    \tilde \varphi^{(k)}(x) &= -\frac{1}{\pi}\int\limits_0^\infty y^{k+1}I^{(k)}(xy)\csch^2(y)dy \nonumber \\
    I''(x) &= \frac{\sech^2 x}{x}+2\frac{\log(\cosh x)-x\tanh{x}}{x^3}\label{second_deriv}\\
    I'''(x) &= \frac{6(x\tanh x-\log(\cosh x))-3x^2\sech^2 x-2x^3\tanh x\sech^2 x}{x^4} \label{I3}
\end{align}
The Laurent series for $\cot(x)$ is of the form $\frac{1}{x}-\sum\limits_{n=0}^\infty a_nx^{2n+1}$, where the $a_n$ are all positive, with convergence over the domain of interest. Thus, $\tilde \psi'(x)$ is decreasing and concave down on $(0,1)$. Thus, if $I''(x)$ is negative, it follows that $\tilde \varphi'(x)-\frac{\log 2}{2}\tilde \psi'(x)$ is increasing. By manipulating (\ref{second_deriv}), we see that $I''(x)$ is negative if and only if 
\begin{equation*}
    \frac{x^2}{2}\le x\sinh x \cosh x-\log(\cosh x)\cosh^2 x
\end{equation*}
This inequality is saturated at the origin. Thus, if the relation holds for the derivative of each side, we may recover the initial inequality by integration. Taking derivatives, it suffices to show
\begin{equation*}
    x\le x\cosh(2x)-\sinh(2x)\log(\cosh x)
\end{equation*}

Again, this inequality is saturated at the origin. Thus, we differentiate again and would like to show
\begin{equation*}
    1\le2x\sinh(2x)-2\cosh(2x)\log(\cosh x)+1
\end{equation*}
or equivalently
\begin{equation*}
    0\le x\tanh(2x)-\log(\cosh x)
\end{equation*}
This inequality is also saturated at $x=0$. We differentiate one last time and seek to show
\begin{equation}
    0\le\tanh(2x)-\tanh(x)+2x\sech^2(2x)\label{final_increasing}
\end{equation}
But $\tanh(x)$ is increasing, so $\tanh(2x)\ge\tanh(x)$. Thus, (\ref{final_increasing}) is valid for $x\ge 0$ and we have shown that $\tilde \varphi'(x)-\frac{\log 2}{2}\tilde \psi'(x)$ is increasing on $(0,1)$. 
\par
To verify convexity we must show $\tilde \varphi'''(x)-\frac{\log 2}{2}\tilde \psi'''(x)\ge0$. The representation in equation~(\ref{varphi_rep}) demonstrates that $\tilde \varphi$ is a rescaled form of $\varphi$ with the pole at zero removed. Similarly, we now obtain a form of $\tilde \psi'$ which demonstrates that we have removed the pole of $\psi$ at zero. This process was carried out in \cite{GM} using the following representation of $\csc^2(x)$
\begin{equation*}
    \csc^2(x) = \sum\limits_{k=-\infty}^\infty\frac{1}{(k\pi+x)^2}
\end{equation*}
Thus, 
\begin{equation*}
    \tilde \psi'(x) = -\frac{2}{\pi}\sum\limits_{k\ne 0}\frac{1}{(2k+x)^2}
\end{equation*}
This series converges uniformly on $(0,\pi)$, so we may commute differentiation and summation to obtain
\begin{equation*}
    \tilde \psi'''(x) = -\frac{12}{\pi}\sum\limits_{k\ne 0}\frac{1}{(2k+x)^4}
\end{equation*}
Thus, we must show that
\begin{equation*}
    -\frac{1}{\pi}\int\limits_0^\infty y^4I^{(3)}(xy)\csch^2(y)dy\ge-\frac{6\log 2}{\pi} \sum\limits_{k\ne 0}\frac{1}{(2k+x)^4}\\
\end{equation*}
It will be sufficient to show that
\begin{equation*}
    \int\limits_0^\infty y^4I^{(3)}(xy)\csch^2(y)dy\le\frac{6\log 2}{(x-2)^4}
\end{equation*}
We will use the following lemma:

\begin{lemma}\label{xon6}
    $I'''(x)\le\frac{x}{6}$ for $x>0$.
\end{lemma}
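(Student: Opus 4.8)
The plan is to clear denominators and reduce the claim to a first-order monotonicity statement. Multiplying the representation of $I'''$ in (\ref{I3}) by $x^4>0$, the asserted inequality $I'''(x)\le x/6$ is equivalent to $g(x)\le 0$ for $x>0$, where
\begin{equation*}
g(x)=6\bigl(x\tanh x-\log(\cosh x)\bigr)-3x^2\sech^2 x-2x^3\tanh x\,\sech^2 x-\frac{x^5}{6}.
\end{equation*}
Since every term carries a positive power of $x$, we have $g(0)=0$, so it suffices to show that $g$ is nonincreasing on $(0,\infty)$.

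First I would differentiate $g$, the point being that almost everything cancels: the $\tanh x$ contributions from the first two terms cancel, the leftover $6x\sech^2 x$ is killed by part of the derivative of $-3x^2\sech^2 x$, and the surviving $x^2\tanh x\,\sech^2 x$ pieces cancel against those from the fourth term. Using $\tfrac{d}{dx}\sech^2 x=-2\sech^2 x\tanh x$ and $\sech^2 x=1-\tanh^2 x$, I expect to be left with
\begin{equation*}
g'(x)=2x^3\sech^2 x\,(3\tanh^2 x-1)-\frac{5x^4}{6}.
\end{equation*}
Dividing by $x^3>0$, the inequality $g'(x)\le 0$ is then equivalent to
\begin{equation*}
h(x):=2\sech^2 x\,(3\tanh^2 x-1)\le \frac{5x}{6},\qquad x>0.
\end{equation*}

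To analyze $h$ I would substitute $u=\tanh x\in(0,1)$, so that $h=2(1-u^2)(3u^2-1)=-6u^4+8u^2-2$. As a function of $u^2$ this is elementary: its maximum over $(0,1)$ is $2/3$, attained at $u^2=2/3$, and it is increasing for $u^2<2/3$. Two regimes are then immediate. When $3\tanh^2 x\le 1$ (equivalently $u\le 1/\sqrt3$) one has $h(x)\le 0\le \tfrac{5x}{6}$. When $x\ge 4/5$ one has $\tfrac{5x}{6}\ge \tfrac{2}{3}\ge h(x)$ by the global bound just obtained. What remains is the short interval $x_0<x<\tfrac45$, where $x_0$ is the point with $\tanh x_0=1/\sqrt3$, namely $x_0=\tfrac12\log(2+\sqrt3)$.

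This middle interval is the main obstacle, since there the global bound $h\le 2/3$ is too weak (indeed $\tfrac{5x_0}{6}<\tfrac23$) while $h$ is strictly positive. I would close it by noting that $\tanh(4/5)<\sqrt{2/3}$, so $h$ is increasing throughout $[x_0,4/5]$ and hence $h(x)\le h(4/5)$; it then suffices to verify the single comparison $h(4/5)<\tfrac{5}{6}x_0$, which holds comfortably. If one prefers to avoid the numerical value of $h(4/5)$, the same conclusion follows from a tangent-line estimate: factoring $3\tanh^2 x-1=3(\tanh x-\tanh x_0)(\tanh x+\tanh x_0)$, bounding $\tanh x-\tanh x_0\le \sech^2 x_0\,(x-x_0)$ by concavity of $\tanh$, and using $\sech^2 x\le \sech^2 x_0=2/3$ for $x\ge x_0$, one obtains a linear majorant for $h$ on $[x_0,\infty)$ that lies below $\tfrac{5x}{6}$ on an interval extending past $4/5$, thereby covering $[x_0,4/5]$ and meeting the region $x\ge 4/5$. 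Either way $g'\le 0$ on $(0,\infty)$, whence $g\le 0$ and the lemma follows.
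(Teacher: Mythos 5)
Your proof is correct, and its first half coincides with the paper's: both clear the $x^4$ denominator from (\ref{I3}), observe that the resulting function $g$ vanishes at the origin, and differentiate once; your computed $g'$ is right, and your inequality $2\sech^2 x\,(3\tanh^2 x-1)\le \tfrac{5x}{6}$ is exactly the paper's reduction (\ref{coshless2}), $2(\cosh(2x)-2)\le \tfrac{5}{6}x\cosh^4 x$, after multiplying through by $\cosh^4 x$ and using $\cosh(2x)-2=\cosh^2 x\,(3\tanh^2 x-1)$. Where you genuinely diverge is in verifying this inequality. The paper treats it as a quadratic in $u=\cosh^2 x$ with $x$-dependent coefficients and runs a case analysis: negative discriminants dispose of $x\ge 1$ and $x>\tfrac45$, and on $(\tfrac12,\tfrac45]$ it compares $\cosh^2 x$ against the smaller root $\frac{12-6\sqrt{4-5x}}{5x}$, using monotonicity of that root plus an endpoint evaluation. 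You instead substitute $u=\tanh x$, turning the left side into the bounded quartic $-6u^4+8u^2-2$, whose global maximum is $\tfrac23$; then $x\le x_0=\tfrac12\log(2+\sqrt3)$ gives a nonpositive left side, the entire range $x\ge \tfrac45$ follows at once from $\tfrac{5x}{6}\ge\tfrac23$, and only the window $[x_0,\tfrac45]$ requires work --- either the single comparison $h(\tfrac45)<\tfrac56 x_0$ (which holds with a wide margin, roughly $0.36$ versus $0.55$) or your tangent-line estimate, both of which check out. Your endgame is arguably cleaner: one global bound replaces the paper's two discriminant cases, and the monotonicity-of-$h$ observation replaces the monotonicity of the root function. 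Note that neither route escapes an explicit numerical verification at the end (yours of $h(\tfrac45)$ against $\tfrac56 x_0$, the paper's of $\cosh^2(\tfrac45)$ against $\frac{12-6\sqrt{3/2}}{5/2}$), but yours enjoys the more comfortable margin.
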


Assuming the lemma, we can finish the proof of Lemma~\ref{log2_lemma}. 

In \cite{BM}, Boyadzhiev and Moll demonstrated that $\int\limits_0^\infty y^5\csch^2(y)dy=\frac{15\zeta(5)}{2}$. Thus,
\begin{align*}
    \int\limits_{0}^\infty y^4 I'''(xy)\csch^2(y)dy &\le \frac{1}{6}x\int\limits_0^{\infty}y^5\csch^2(y)dy\\
    &=\frac{1}{6}\cdot\frac{15\zeta(5)}{2}x\\
    &\le\frac{5\zeta(4)}{4}x\\
    &=\frac{\pi^4}{72}x\\
    &\le\frac{3}{2}x\\
\end{align*}
The line $y=\frac{3125}{2048}x$ is tangent to the graph of the convex function $\frac{4}{(x-2)^4}$. Thus, $\frac{3}{2}x\le\frac{3125}{2048}x\le\frac{4}{(x-2)^4}\le\frac{6\log 2}{(x-2)^4}$, which concludes the proof of Lemma~\ref{log2_lemma}.
\end{proof}

All that remains to be done is the proof of Lemma~\ref{xon6}.

\begin{proof}[Proof of Lemma~\ref{xon6}]
Using (\ref{I3}), we must establish
\begin{equation*}
    6(x\tanh x-\log(\cosh x))-3x^2\sech^2 x-2x^3\tanh x\sech^2 x -\frac{x^5}{6} \le 0.
\end{equation*}

This inequality is saturated at the origin. Thus, taking derivatives and manipulating the result algebraically, it suffices to show
\begin{equation}\label{coshless2}
    2(\cosh(2x)-2)-\frac{5}{6}x\cosh^4x\le 0.
\end{equation}
If $\cosh(2x)<2$, inequality~(\ref{coshless2}) is clearly valid. Let $x_0=\frac{1}{2}\log(2+\sqrt{3})$ be the unique positive value where $\cosh(2x_0)=2$. Note that $x_0\in(\frac{1}{2},1)$. If $x\ge1$, we get
\begin{align*}
     \frac{2(\cosh(2x)-2)}{x}-\frac{5}{6}\cosh^4x&\le 2(\cosh(2x)-2)-\frac{5}{6}\cosh^4x\\
     &=2(2\cosh^2(x)-3)-\frac{5}{6}\cosh^4 x
\end{align*}
But the polynomial $2(2u-3)-\frac{5}{6}u^2$ has no real roots. Thus, (\ref{coshless2}) is valid for $x\ge 1$. Now, suppose $x\in(\frac{1}{2},1)$. Then, we would like to show
\begin{equation*}
    2(2\cosh^2x-3)-\frac{5}{6}x\cosh^4 x \le 0
\end{equation*}
Now consider the polynomial $2(2u-3)-\frac{5}{6}xu^2$. If $x>\frac{4}{5}$ this has no real roots. Otherwise, it has the solutions $u=\frac{12\pm6\sqrt{4-5x}}{5x}$. Thus, if we show that
\begin{equation}\label{ineq1}
    \cosh^2(x) \le \frac{12-6\sqrt{4-5x}}{5x}
\end{equation}
for $x\in(\frac{1}{2},\frac{4}{5})$, we will complete the verification of Lemma~\ref{xon6} for the entire range of $x>0$. By computing derivatives we can see that the the right-hand-side of (\ref{ineq1}) is increasing. Thus, for $x\in(\frac{1}{2},\frac{4}{5})$,
\begin{equation*}
    \frac{12-6\sqrt{4-5(\frac{1}{2})}}{5\left(\frac{1}{2}\right)}\le\frac{12-6\sqrt{4-5x}}{5x}
\end{equation*}
Additionally, for $x\in (\frac{1}{2},\frac{4}{5}).$ 
\begin{equation*}
    \cosh^2(x)\le\cosh^2\left(\frac{4}{5}\right)
\end{equation*}
It may be verified that
\begin{equation*}
    \cosh^2\left(\frac{4}{5}\right)\le\frac{12-6\sqrt{4-5(\frac{1}{2})}}{5\left(\frac{1}{2}\right)}
\end{equation*}
from which the desired inequality follows immediately.
    
\end{proof}

\bibliographystyle{amsplain}

\end{document}